\documentclass[12pt, amsfonts]{amsart}

\usepackage{cite}                       
\usepackage{amsfonts,amssymb}           
\usepackage[dvipdfm,
            pdfstartview=FitH,
            CJKbookmarks=true,
            bookmarksnumbered=true,
            bookmarksopen=false,
            colorlinks,                 
            linkcolor=black,            
            anchorcolor=black,          
            citecolor=black             
            ]{hyperref}                 


\newtheorem{thm}{Theorem~}

\newtheorem{lem}[thm]{Lemma~}

\newtheorem{prop}[thm]{Proposition~}
\theoremstyle{remark}
\newtheorem{rem}[thm]{Remark~}
\numberwithin{equation}{section}


\newtheoremstyle{specthm}{1.5ex plus 1ex minus .2ex}{1.5ex plus 1ex minus .2ex}{\it}{}{\bf}{}{1em}{\thmnote{#3}}
\theoremstyle{specthm}



\def\R{{\mathbb R}}

\def\e{\varepsilon}

\newcommand{\sobo}[2]{W^{#1,#2}}                        
\newcommand{\abs}[1]{\left\vert{#1}\right\vert}         
\newcommand{\pos}[1]{\left(#1\right)_+}                 
\newcommand{\set}[1]{\left\{{#1}\right\}}               
\newcommand{\sr}{S^{n-1}}                               
\newcommand{\normk}[2]{\left\Vert{#2}\right\Vert_{#1}}  





\begin{document}


\title[Asymmetric anisotropic fractional Sobolev norms]
{Asymmetric anisotropic fractional Sobolev norms}

\author{Dan Ma}

\address{Institut f\"{u}r Diskrete Mathematik und Geometrie\\
Technische Universit\"{a}t Wien\\
Wiedner Hauptstra{\ss}e 8--10/1046, Wien 1040, Austria}

\email{madan516@gmail.com}

\date{\today}

\subjclass{46E35, 52A20}

\begin{abstract}
    Bourgain, Brezis \& Mironescu showed that (with suitable scaling)
    the fractional Sobolev $s$-seminorm of a function\\ $f\in\sobo{1}{p}(\R^n)$
    converges to the Sobolev seminorm of $f$ as $s\rightarrow1^-$.
    Ludwig introduced the anisotropic fractional Sobolev $s$-seminorms of $f$
    defined by a norm on $\R^n$ with unit ball $K$, and showed that
    they converge to the aniso\-tropic Sobolev seminorm of $f$ defined by
    the norm whose unit ball is the polar $L_p$ moment body of $K$, as $s\rightarrow1^-$. The asymmetric anisotropic $s$-seminorms are shown to
    converge to the anisotropic Sobolev seminorm of $f$ defined by the Minkowski
    functional of the polar asymmetric $L_p$ moment body of $K$.
\end{abstract}
\maketitle


\section{Introduction}

Let $\Omega$ be an open set in $\R^n$. For $p\geq1$ and $0<s<1$, Gagliardo introduced
the fractional Sobolev spaces
\[\sobo{s}{p}(\Omega)=\set{f\in L^p(\Omega):\frac{\abs{f(x)-f(y)}}{\abs{x-y}^{\frac{n}{p}+s}}\in L^p(\Omega\times\Omega)},\]
and the fractional Sobolev $s$-seminorm of a function $f\in L^p(\Omega)$
\[\normk{\sobo{s}{p}(\Omega)}{f}^p=\int\limits_\Omega\int\limits_\Omega\frac{\abs{f(x)-f(y)}^p}{\abs{x-y}^{n+ps}}dxdy\]
(see \cite{Gag57}). They have found many applications in pure and applied mathematics (see \cite{BBM02b,DPV12,Maz11}).

Although $\normk{\sobo{s}{p}(\Omega)}{f}\rightarrow\infty$ as $s\rightarrow1^-$,
Bourgain, Brezis and Mironescu showed in \cite{BBM02} that
\begin{equation}\label{eqn:BBM}
    \lim_{s\rightarrow1^-}(1-s)\normk{\sobo{s}{p}(\Omega)}{f}^p=\frac{K_{n,p}}{p}\normk{\sobo{1}{p}(\Omega)}{f}^p,
\end{equation}
for $f\in\sobo{1}{p}(\Omega)$ and $\Omega\subset\R^n$ a smooth bounded domain,
where
\[K_{n,p}=\frac{2\Gamma((p+1)/2)\pi^{(n-1)/2}}{\Gamma((n+p)/2)}\]
is a constant depending on $n$ and $p$,
\[\normk{\sobo{1}{p}}{f}^p=\int\limits_\Omega\abs{\nabla f(x)}^pdx\]
is the Sobolev seminorm of $f$, and $\nabla f:\R^n\rightarrow\R^n$ denotes
the $L^p$ weak derivative of $f$.

If instead of the Euclidean norm $\abs{\cdot}$, we consider an arbitrary norm $\normk{K}{\cdot}$ with unit ball $K$, we obtain the anisotropic Sobolev seminorm,
\[\normk{\sobo{1}{p},K}{f}^p=\int\limits_{\R^n}\normk{K^\ast}{\nabla f(x)}^pdx,\]
where $K^\ast=\set{v\in\R^n: v\cdot x\leq1\mbox{ for all }x\in K}$
is the polar body of $K$, and $v\cdot x$ denotes the inner product between $v$ and $x$.
Anisotropic Sobolev seminorms and the corresponding
Sobolev inequalities attracted a lot of attentions in recent years (see \cite{AFTL97,CNV04,FMP13,Gro86}).

Anisotropic $s$-seminorms, introduced very recently by Ludwig \cite{Lud13b}, reflect a fine structure of the anisotropic fractional Sobolev spaces. She established that
\[\lim_{s\rightarrow1^-}(1-s)\int\limits_{\R^n}\int\limits_{\R^n}\frac{\abs{f(x)-f(y)}^p}{\normk{K}{x-y}^{n+ps}}dxdy=\frac{2}{p}\int\limits_{\R^n}\normk{Z_p^\ast K}{\nabla f(x)}^pdx,\]
for $f\in\sobo{1}{p}(\R^n)$ with compact support, where the norm associated with $Z_p^\ast K$, the polar $L_p$ moment body of $K$, is defined as
\[\normk{Z_p^\ast K}{v}^p=\frac{n+p}{2}\int\limits_K\abs{v\cdot x}^pdx,\]
for $v\in\R^n$, and a convex body $K\subset\R^n$.
Several different other cases were considered in \cite{Lud13a,Lud13b,Pon04}.

In this paper, by replacing the absolute value $\abs{\cdot}$ by the positive part $\pos{\cdot}$, for $x\in\R$, where $\pos{x}=\max\set{0,x}$,
we obtain the following generalization. Note that here it is no longer required that $K$ is origin-symmetric. As a consequence, for $K\subset\R^n$ a convex body containing
the origin in its interior and $x\in\R^n$,
\[\normk{K}{x}=\min\set{\lambda\geq0:x\in\lambda K}\]
just defines the Minkowski functional of $K$ and no longer a norm.

\begin{thm}\label{thm:main}
    If $f\in\sobo{1}{p}(\R^n)$ has compact support, then
    \[\lim_{s\rightarrow1^-}(1-s)\int\limits_{\R^n}\int\limits_{\R^n}\frac{\pos{f(x)-f(y)}^p}{\normk{K}{x-y}^{n+sp}}dxdy=\frac{1}{p}\int\limits_{\R^n}\normk{Z_p^{+,*}K}{\nabla f(x)}^pdx,\]
    where $Z_p^{+,*}K$ is the polar asymmetric $L_p$ moment body of $K$.
\end{thm}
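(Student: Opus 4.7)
The strategy mirrors Ludwig's proof in the symmetric case, with $\abs{\cdot}^p$ systematically replaced by $\pos{\cdot}^p$. I first substitute $z = x - y$, then pass to polar coordinates $z = r\theta$, reducing the problem to a one-dimensional Bourgain-Brezis-Mironescu limit in each direction $\theta \in S^{n-1}$. I interchange this one-dimensional limit with the $\theta$-integration by a dominated-convergence argument based on uniform bounds, and finally convert the resulting $S^{n-1}$-integral back into an integral over $K$ to recognize the Minkowski functional of $Z_p^{+,*}K$. The asymmetry of $K$ is absorbed harmlessly into the direction variable $\theta$.

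\textbf{Reduction to a one-dimensional limit.} After the substitution, Fubini, and polar coordinates (using $\normk{K}{r\theta} = r\normk{K}{\theta}$ and $dz = r^{n-1}\,dr\,d\theta$), the integral on the left-hand side of Theorem~\ref{thm:main} equals
\[
\int_{S^{n-1}} \frac{1}{\normk{K}{\theta}^{n+sp}} \int_0^\infty \frac{\phi_\theta(r)}{r^{1+sp}}\,dr\,d\theta, \qquad \phi_\theta(r) := \int_{\R^n} \pos{f(x) - f(x-r\theta)}^p\,dx.
\]
For fixed $\theta$, I rewrite $(1-s)r^{-1-sp} = \frac{\alpha}{p}r^{\alpha-1}\cdot r^{-p}$ with $\alpha := p(1-s)\to 0^+$. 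On $(0,1)$ the measure $\frac{\alpha}{p}r^{\alpha-1}\,dr$ has total mass $1/p$ and concentrates at the origin, while the tail on $(1,\infty)$ contributes $O(\alpha)$ thanks to the crude bound $\phi_\theta(r) \le 2^p\normk{L^p}{f}^p$. Thus
\[
\lim_{s\to 1^-}(1-s)\int_0^\infty \frac{\phi_\theta(r)}{r^{1+sp}}\,dr = \frac{1}{p}\lim_{r\to 0^+}\frac{\phi_\theta(r)}{r^p} = \frac{A(\theta)}{p},
\]
where $A(\theta):=\int_{\R^n}\pos{\nabla f(x)\cdot\theta}^p\,dx$. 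The identification $\phi_\theta(r)/r^p\to A(\theta)$ relies on the $L^p$-convergence of the difference quotients $r^{-1}(f(\cdot+r\theta)-f(\cdot))\to \nabla f(\cdot)\cdot\theta$ for $f\in\sobo{1}{p}(\R^n)$, together with the $1$-Lipschitz property of $x\mapsto x_+$.

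\textbf{Exchange of limit and identification.} To pass the limit inside the $\theta$-integration I need a bound on $(1-s)\int_0^\infty \phi_\theta(r)r^{-1-sp}\,dr$ uniform in $s\in[1/2,1)$ and $\theta\in S^{n-1}$. Splitting at $r = 1$ and applying the two estimates $\phi_\theta(r)\le r^p\normk{L^p}{\nabla f}^p$ (via Jensen applied to $f(x)-f(x-r\theta) = \int_0^r \nabla f(x-t\theta)\cdot\theta\,dt$) and $\phi_\theta(r)\le 2^p\normk{L^p}{f}^p$ yields such a bound. Since $K$ contains the origin in its interior, $\normk{K}{\theta}$ is bounded away from $0$ on $S^{n-1}$, so the whole integrand is uniformly bounded there and dominated convergence gives
\[
\lim_{s\to 1^-}(1-s)\int_{\R^n}\int_{\R^n}\frac{\pos{f(x)-f(y)}^p}{\normk{K}{x-y}^{n+sp}}\,dx\,dy = \frac{1}{p}\int_{S^{n-1}}\frac{A(\theta)}{\normk{K}{\theta}^{n+p}}\,d\theta.
\]
Exchanging the order of integration and inverting polar coordinates on $K$ via $\rho_K(\theta) = \normk{K}{\theta}^{-1}$ gives, for every $v\in\R^n$,
\[
\int_{S^{n-1}}\frac{\pos{v\cdot\theta}^p}{\normk{K}{\theta}^{n+p}}\,d\theta = (n+p)\int_K\pos{v\cdot y}^p\,dy = \normk{Z_p^{+,*}K}{v}^p,
\]
the last equality being the definition of the Minkowski functional of the polar asymmetric $L_p$ moment body. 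The hardest analytic step is establishing the uniform bound that justifies the swap of limits; the remaining ingredients are routine applications of Fubini and polar coordinates.
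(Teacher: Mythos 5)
Your proof is correct but takes a genuinely different route from the paper's. The paper decomposes the double integral via a Blaschke--Petkantschin type formula into integrals over the lines $L = [u]+z$ parallel to each direction $u \in S^{n-1}$ and then applies a one-dimensional asymmetric BBM limit (its Proposition~\ref{thm:1dim}, proved via radial mollifiers, a local Lipschitz bound, and density in $C^2$) to each line restriction $\bar{f}\big|_L \in \sobo{1}{p}(L)$, which requires invoking fine properties of Sobolev functions (absolute continuity along a.e.\ line, cited from Evans--Gariepy). You instead integrate first in the $x$-variable, reducing everything to $\phi_\theta(r) = \int_{\R^n}\pos{f(x)-f(x-r\theta)}^p\,dx$, and extract the radial limit directly from the $L^p$-convergence of the difference quotients $r^{-1}\bigl(f(\cdot)-f(\cdot-r\theta)\bigr) \to \nabla f(\cdot)\cdot\theta$ (combined with the $1$-Lipschitz property of $t\mapsto t_+$) together with the weak concentration of the measure $\tfrac{\alpha}{p}r^{\alpha-1}\,dr$ at $r=0$. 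This sidesteps the line decomposition and the 1D lemma entirely and is arguably more economical; the trade-off is that the paper's Proposition~\ref{thm:1dim} is itself a one-dimensional asymmetric BBM theorem of independent interest, whereas your argument only produces the averaged statement. Your uniform bounds $\phi_\theta(r) \le \min\set{r^p\normk{L^p}{\nabla f}^p,\; 2^p\normk{L^p}{f}^p}$, together with $\normk{K}{\theta}$ being bounded away from $0$ on $S^{n-1}$, correctly supply the dominating function needed for the interchange, and the final polar-coordinate conversion to $\normk{Z_p^{+,*}K}{v}^p = (n+p)\int_K\pos{v\cdot y}^p\,dy$ coincides with the paper's.
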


For a convex body $K\subset\R^n$, the polar asymmetric $L_p$ moment body is the unit ball of the Minkowski functional defined by
\[\normk{Z_p^{+,\ast}K}{v}^p=(n+p)\int\limits_K\pos{v\cdot x}^pdx,\]
for $v\in\R^n$, $Z_p^-K=Z_p^+(-K)$.
For $p>1$, in \cite{Lud05}, Ludwig introduced and characterized the two-parameter family
\[c_1\cdot Z_p^+K+_pc_2\cdot Z_p^-K\]
as all possible $L_p$ analogs of moment bodies,
including the symmetric case
\[Z_pK=\frac{1}{2}\cdot Z_p^+K+_p\frac{1}{2}\cdot Z_p^-K,\]
where $\normk{(\alpha\cdot K+_p\beta\cdot L)^\ast}{\cdot}^p=\alpha\normk{K^\ast}{\cdot}^p+\beta\normk{L^\ast}{\cdot}^p$,
for $\alpha,\beta\geq0$, defines the $L_p$ Minkowski combination.
In recent years, this family of convex bodies have found important applications
within convex geometry, probability theory, and the local theory of Banach spaces
(see \cite{Gar06,Hab12b,HS09a,Lud03,Lud05,Lud10b,Lut90,LYZ00a,LYZ00b,LYZ02b,LYZ04c,LYZ10b,Pao06a,PW12,Par13a,Par13b,Wan11}).

The proof given in this paper makes use of an asymmetric version of the one-dimensional case of result (\ref{eqn:BBM}) by Bourgain, Brezis and Mironescu and an asymmetric decomposition of Blaschke-Petkantschin type.


\section{Proof of the main result}

First, we need the asymmetric one-dimensional analogue of (\ref{eqn:BBM}).
For its proof we require the following result from \cite{BBM02}.

\begin{lem}\label{thm:est}
    Let $\rho\in L^1(\R^n)$ and $\rho\geq0$.
    If $f\in\sobo{1}{p}(\R^n)$ is compactly supported
    and $1\leq p<\infty$, then
    \[\int\limits_{\R^n}\int\limits_{\R^n}\frac{\abs{f(x)-f(y)}^p}{\abs{x-y}^p}\rho(x-y)dxdy\leq C\normk{\sobo{1}{p}}{f}^p\normk{L^1}{\rho}\]
    where $C$ depends only on $p$ and the support of $f$.
\end{lem}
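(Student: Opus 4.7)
The plan is to reduce to $f\in C_c^\infty(\R^n)$ by a standard density argument and then to bound the difference quotient pointwise by representing $f(x)-f(y)$ as a line integral of $\nabla f$ along the segment from $y$ to $x$.

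After the substitution $h=x-y$ with $x$ retained, the left-hand side of the claimed inequality becomes
\[\int_{\R^n}\rho(h)\int_{\R^n}\frac{\abs{f(x)-f(x-h)}^p}{\abs{h}^p}\,dx\,dh.\]
For $f\in C_c^\infty(\R^n)$, the fundamental theorem of calculus writes $f(x)-f(x-h)=\int_0^1\nabla f(x-th)\cdot h\,dt$. Jensen's inequality for the convex function $\abs{\cdot}^p$ on the probability space $[0,1]$, together with Cauchy--Schwarz on the inner product, yields the pointwise bound
\[\abs{f(x)-f(x-h)}^p\leq \abs{h}^p\int_0^1\abs{\nabla f(x-th)}^p\,dt.\]
Dividing by $\abs{h}^p$, integrating in $x$, and using Fubini combined with the translation invariance of Lebesgue measure gives $\int_{\R^n}\abs{f(x)-f(x-h)}^p\abs{h}^{-p}\,dx\leq \normk{\sobo{1}{p}}{f}^p$ uniformly in $h\in\R^n$. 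Multiplying through by $\rho(h)$ and integrating in $h$ delivers the lemma with constant $C=1$.

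The only step requiring genuine care is the density approximation for general $f\in\sobo{1}{p}(\R^n)$ of compact support. I would take $f_k\in C_c^\infty(\R^n)$ with $f_k\to f$ in $\sobo{1}{p}(\R^n)$ and all $f_k$ supported in a single fixed compact set (mollify $f$, then multiply by a cut-off equal to one on a neighbourhood of $\mathrm{supp}\,f$). Along a subsequence $f_k\to f$ almost everywhere, so the integrand $\abs{f_k(x)-f_k(y)}^p\abs{x-y}^{-p}\rho(x-y)$ converges pointwise a.e.\ to its $f$-analogue; Fatou's lemma on the left and continuity of $\normk{\sobo{1}{p}}{\cdot}$ on the right then transfer the bound from each $f_k$ to $f$. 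The compact-support hypothesis is used only to make the cut-off uniform in the approximation step; the slicing argument itself yields a constant independent of both $p$ and $\mathrm{supp}\,f$, which is enough for the statement as given.
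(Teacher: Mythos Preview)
Your proof is correct. The paper itself does not prove this lemma; it is simply quoted from \cite{BBM02}. Your argument is the standard one: the translation estimate
\[\int_{\R^n}\abs{f(x)-f(x-h)}^p\,dx\le \abs{h}^p\normk{\sobo{1}{p}}{f}^p,\]
obtained via the line-integral representation, Jensen's inequality, and translation invariance of Lebesgue measure, holds for every $h\in\R^n$ and immediately yields the lemma with $C=1$. This is in fact stronger than the stated conclusion, since your constant depends neither on $p$ nor on the support of $f$. The density step is handled correctly via Fatou; it could also be bypassed, as the translation estimate above is known to hold directly for all $f\in\sobo{1}{p}(\R^n)$, not only smooth ones.
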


Let $\Omega\subset\R$ be a bounded domain.

\begin{prop}\label{thm:1dim}
    If $f\in\sobo{1}{p}(\Omega)$, then
    \begin{equation}\label{eqn:1dimo}
        \lim_{s\rightarrow1^-}(1-s)\int\limits_\Omega\int\limits_{\Omega\cap\set{x>y}}\frac{\pos{f(x)-f(y)}^p}{\abs{x-y}^{1+ps}}dxdy=\frac{1}{p}\int\limits_\Omega\pos{f'(x)}^pdx.
    \end{equation}
\end{prop}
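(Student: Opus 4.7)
The plan is to first establish (\ref{eqn:1dimo}) for $f\in C^1(\overline\Omega)$ and then extend to general $f\in\sobo{1}{p}(\Omega)$ by density, using Lemma~\ref{thm:est} to control the difference uniformly in $s$.

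For the smooth case I would substitute $z=x-y>0$ in the inner integral and split it at some threshold $\delta>0$. For $z>\delta$, the integrand is bounded by $(2\norm{f}_\infty)^p z^{-1-ps}$, whose integral over $(\delta,\infty)$ stays bounded as $s\to 1^-$, so the contribution vanishes when multiplied by $(1-s)$. For $0<z<\delta$, uniform continuity of $f'$ on the compact set $\overline\Omega$ yields, for any $\e>0$, a $\delta$ so small that $\abs{f(x)-f(x-z)-zf'(x)}\leq\e z$ uniformly in $x\in\Omega$ and $0<z<\delta$; since $t\mapsto\pos{t}^p$ is Lipschitz on bounded sets with constant $p\norm{f'}_\infty^{p-1}$, this upgrades to $\pos{f(x)-f(x-z)}^p=z^p(\pos{f'(x)}^p+O(\e))$ uniformly. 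The key calculation
\[(1-s)\int\limits_0^\delta \frac{dz}{z^{1+p(s-1)}}=\frac{\delta^{p(1-s)}}{p}\longrightarrow\frac{1}{p}\quad(s\to 1^-)\]
then shows, after integrating in $x$, that the head contribution tends to $\frac{1}{p}\int_\Omega\pos{f'(x)}^p\,dx$ modulo $O(\e\abs{\Omega})$; letting $\e\to 0$ concludes the smooth case.

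For the density step I would approximate $f\in\sobo{1}{p}(\Omega)$ by $f_n\in C^\infty(\overline\Omega)$ with $f_n\to f$ in $\sobo{1}{p}$. Denoting the left side of (\ref{eqn:1dimo}) by $I^+(s,f)$, the pointwise inequality
\[\abs{\pos{a}^p-\pos{b}^p}\leq p\bigl(\pos{a}^{p-1}+\pos{b}^{p-1}\bigr)\abs{a-b},\]
applied with $a=f(x)-f(y)$, $b=f_n(x)-f_n(y)$, followed by H\"older's inequality with exponents $p/(p-1)$ and $p$, bounds $\abs{I^+(s,f)-I^+(s,f_n)}$ by a product of two factors, each of which is controlled via Lemma~\ref{thm:est} with the kernel $\rho_s(z)=(1-s)\chi_{[-R,R]}(z)/\abs{z}^{1+p(s-1)}$, where $R$ is the diameter of $\Omega$. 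Since $\normk{L^1}{\rho_s}=2R^{p(1-s)}/p$ stays uniformly bounded as $s\to 1^-$, one obtains
\[\abs{I^+(s,f)-I^+(s,f_n)}\leq C\bigl(\normk{\sobo{1}{p}}{f}^{p-1}+\normk{\sobo{1}{p}}{f_n}^{p-1}\bigr)\normk{\sobo{1}{p}}{f-f_n},\]
uniformly in $s$; a simpler bound on the right side of (\ref{eqn:1dimo}) in the $\sobo{1}{p}$ norm allows interchange of the limits in $s$ and $n$.

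The main obstacle is the density step. The crucial technical point is matching the fractional kernel $\abs{x-y}^{-1-ps}$ to the $\abs{x-y}^{-p}$ appearing in Lemma~\ref{thm:est} by absorbing the remaining factor $\abs{x-y}^{-1-p(s-1)}$ into $\rho_s$, with the scaling $(1-s)$ being precisely what keeps $\normk{L^1}{\rho_s}$ uniformly bounded. Once this reduction is in place and the elementary Lipschitz estimate for $\pos{\cdot}^p$ is combined with H\"older's inequality, the extension proceeds and reduces the proposition to the smooth case, whose proof itself is a routine concentration-of-mass computation.
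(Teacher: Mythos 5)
Your proposal is correct and follows the same skeleton as the paper's proof: establish the limit for smooth $f$ on $\overline\Omega$, then extend by density via Lemma~\ref{thm:est}. The technical routes differ in two places. In the density step you work directly with the $p$-th power, which forces the pointwise inequality $\abs{\pos{a}^p-\pos{b}^p}\leq p\left(\pos{a}^{p-1}+\pos{b}^{p-1}\right)\abs{a-b}$ followed by a H\"older step; the paper instead sets $F_\e(x,y)=\pos{f(x)-f(y)}\abs{x-y}^{-1}\rho_\e^{1/p}(x-y)$ and compares $\normk{L^p}{F_\e}$ with $\normk{L^p}{G_\e}$, so the $L^p$ triangle inequality together with the $1$-Lipschitz bound $\abs{\pos{a}-\pos{b}}\leq\abs{a-b}$ yields the density estimate in one line, with no H\"older and no Lipschitz estimate for $\pos{\cdot}^p$. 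In the smooth case you compute with the explicit kernel (using $(1-s)\int_0^\delta z^{p(1-s)-1}\,dz=\delta^{p(1-s)}/p$) and only need $C^1$ regularity; the paper first proves a more general statement for an arbitrary family of radial mollifiers $\rho_\e$ on the dense class $C^2(\bar\Omega)$ by dominated convergence, and then specializes to $\rho_\e(z)=p\e\,\chi_{[0,R]}(\abs{z})\,\abs{z}^{p\e-1}/R^{p\e}$ with $\e=1-s$. Both routes are valid: yours is more concrete and self-contained, while the paper's mollifier formulation is more modular. One small imprecision in your write-up: the Lipschitz constant of $t\mapsto\pos{t}^p$ on $[-M,M]$ is $pM^{p-1}$, so the relevant $M$ is $\norm{f'}_\infty+\e$ rather than $\norm{f'}_\infty$; this does not affect the argument.
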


\begin{proof}
    Take a sequence $(\rho_\e)$ of radial mollifiers, i.e. $\rho_\e(x)=\rho_\e(\abs{x})$;
    $\rho_\e\geq0$; $\int_0^\infty\rho_\e(x)dx=1$; $\lim\limits_{\e\rightarrow0}\int_\delta^\infty\rho_\e(r)dr=0$ for every $\delta>0$.
    Let $F_\e(x,y)=\frac{\pos{f(x)-f(y)}}{\abs{x-y}}\rho_\e^{1/p}(x-y)$, for $x>y$. It suffices to prove that
    \begin{equation}\label{eqn:res}
        \lim_{\e\rightarrow0}\int\limits_\Omega\int\limits_{\Omega\cap\set{x>y}}{F_\e}^p(x,y)dxdy=\int\limits_\Omega\pos{f'(x)}^pdx.
    \end{equation}
    Indeed, as in \cite{Spe11}, let $R>\max\set{\abs{x-y}:x,y\in\Omega}$, $\e=1-s$ and
    \[\rho_\e(x)=\frac{\chi_{[0,R]}(\abs{x})}{R^{\e p}}\frac{p\e}{\abs{x}^{1-p\e}},\]
    where $\chi_A$ is the indicator function of $A$.
    Then one obtains (\ref{eqn:1dimo}) from \eqref{eqn:res} as desired.

    By Lemma \ref{thm:est}, we have, for any $\e>0$ and $f,g\in\sobo{1}{p}(\Omega)$
    \[\abs{\normk{L^p(\Omega\times\Omega)}{F_\e}-\normk{L^p(\Omega\times\Omega)}{G_\e}}
    \leq\normk{L^p(\Omega\times\Omega)}{F_\e-G_\e}\leq C\normk{\sobo{1}{p}}{f-g},\]
    for some constant $C$ dependent on $\e, f$ and $g$. Therefore,
    it suffices to establish (\ref{eqn:res}) for $f$ in some dense subset of $\sobo{1}{p}(\Omega)$, e.g., for $f\in C^2(\bar{\Omega})$, where $\bar{\Omega}$ is the closure of $\Omega$.

    Fix $f\in C^2(\bar{\Omega})$. Since for $t\in\R$ and $\lambda>0$, $\pos{\lambda t}=\lambda\pos{t}$, there exists $\delta>0$, such that for $y<x<y+\delta$ and a constant c,
    \[\abs{\frac{\pos{f(x)-f(y)}^p}{\abs{x-y}^p}-\pos{f'(y)}^p}\leq c(x-y).\]
    We have
    \begin{align*}
        &\int\limits_{\Omega\cap\set{x>y}}\frac{\pos{f(x)-f(y)}^p}{\abs{x-y}^p}\rho_\e(x-y)dx\\
        =& \int\limits_{\Omega\cap\set{y<x<y+\delta}}\frac{\pos{f(x)-f(y)}^p}{\abs{x-y}^p}\rho_\e(x-y)dx
        +\int\limits_{\Omega\cap\set{x\geq y+\delta}}\frac{\pos{f(x)-f(y)}^p}{\abs{x-y}^p}\rho_\e(x-y)dx,
    \end{align*}
    yet, only the former integral on the right hand side need be considered, as the latter vanishes.
    In fact, for each fixed $y\in\Omega$, since
    \begin{align*}
        & \abs{\int\limits_y^{y+\delta}\left(\frac{\pos{f(x)-f(y)}^p}{\abs{x-y}^p}-\pos{f'(y)}^p\right)\rho_\e(x-y)dx}\\
        \leq& \int\limits_y^{y+\delta}\abs{\frac{\pos{f(x)-f(y)}^p}{\abs{x-y}^p}-\pos{f'(y)}^p}\rho_\e(x-y)dx\\
        \leq& c\int\limits_y^{y+\delta}(x-y)\rho_\e(x-y)dx\\
        =& c\int\limits_0^\delta r\rho_\e(r)dr\rightarrow0,\mbox{ as }\e\rightarrow0,
    \end{align*}
    we have
    \begin{align*}
        & \lim_{\e\rightarrow0}\int\limits_y^{y+\delta}\frac{\pos{f(x)-f(y)}^p}{\abs{x-y}^p}\rho_\e(x-y)dx\\
        =& \pos{f'(y)}^p\lim_{\e\rightarrow0}\int\limits_y^{y+\delta}\rho_\e(x-y)dx\\
        =& \pos{f'(y)}^p\lim_{\e\rightarrow0}\int\limits_0^\delta\rho_\e(r)dr\\
        =& \pos{f'(y)}^p.
    \end{align*}
    Therefore,
    \begin{equation}\label{eqn:lim}
        \lim_{\e\rightarrow0}\int\limits_{\Omega\cap\set{x>y}}\frac{\pos{f(x)-f(y)}^p}{\abs{x-y}^p}\rho_\e(x-y)dx=\pos{f'(y)}^p.
    \end{equation}
    Since $f\in C^2(\bar{\Omega})$, there exists $L>0$ is such that $\abs{f(x)-f(y)}<L\abs{x-y}$, for every $x,y\in\Omega$, then
    \begin{equation}\label{eqn:bnd}
        \int\limits_{\Omega}\frac{\abs{f(x)-f(y)}^p}{\abs{x-y}^p}\rho_\e(x-y)dx\leq L^p,\quad\mbox{for each }y\in\Omega.
    \end{equation}
    Hence, for $f\in C^2(\Omega)$, \eqref{eqn:res} follows by dominated convergence theorem from \eqref{eqn:lim} and \eqref{eqn:bnd}.
\end{proof}

Now, for $u\in S^{n-1}$, the Euclidean unit sphere, let $[u]=\set{\lambda u:\lambda\in\R}$ and $[u]^+=\set{\lambda u:\lambda>0}$. Denote the $k-$dimensional Hausdorff measure on $\R^n$ by $H^k$.
For $f\in\sobo{1}{p}(\R^n)$, we denote by $\bar{f}$ its precise representative (see \cite[Section 1.7.1]{EG92}). We require the following result. For every $u\in\sr$,
the precise representative $\bar{f}$ is absolutely continuous on the lines $L=\set{x+\lambda u:\lambda\in\R}$ for $H^{n-1}-$a.e. $x\in u^\bot$ and its first-order
(classical) partial derivatives belong to $L^p(\R^n)$ (see \cite[Section 4.9.2]{EG92}).
Hence, we have for the restriction of $\bar{f}$ to $L$,
\begin{equation}\label{eqn:res1dim}
    \bar{f}\big|_L\in\sobo{1}{p}(L)
\end{equation}
for a.e. line $L$ parallel to $u$.

\begin{proof}[Proof of Theorem \ref{thm:main}]
    By the polar coordinate formula and Fubini's theorem, we have
    \begin{equation}\label{eqn:BP}
    \begin{split}
        &\int\limits_{\R^n}\int\limits_{\R^n}\frac{\pos{f(x)-f(y)}^p}{\normk{K}{x-y}^{n+sp}}dH^n(x)dH^n(y)\\
        =& \int\limits_{\R^n}\int\limits_{\sr}\normk{K}{u}^{-(n+ps)}\int\limits_0^\infty\frac{\pos{f(y+ru)-f(y)}^p}{r^{1+sp}}dH^1(r)d\sigma(u)dH^n(y)\\
        =& \int\limits_{\sr}\normk{K}{u}^{-(n+ps)}\int\limits_0^\infty\int\limits_{u^\bot}\int\limits_{[u]+z}\frac{\pos{f(w+ru)-f(w)}^p}{r^{1+sp}}dH^1(w)dH^{n-1}(z)dH^1(r)d\sigma(u)\\
        =& \int\limits_{\sr}\normk{K}{u}^{-(n+ps)}\int\limits_{u^\bot}\int\limits_{[u]+z}\int\limits_0^\infty\frac{\pos{f(w+ru)-f(w)}^p}{r^{1+sp}}dH^1(r)dH^1(w)dH^{n-1}(z)d\sigma(u)\\
        =& \int\limits_{\sr}\normk{K}{u}^{-(n+ps)}\int\limits_{u^\bot}\int\limits_{[u]+z}\int\limits_{[u]^++w}\frac{\pos{f(t)-f(w)}^p}{\abs{t-w}^{1+sp}}dH^1(t)dH^1(w)dH^{n-1}(z)d\sigma(u),
    \end{split}
    \end{equation}
    where $\sigma$ denotes the standard surface area measure on $\sr$.
    By Proposition \ref{thm:1dim} and (\ref{eqn:res1dim}), we obtain
    \begin{equation}\label{eqn:1dim}
    \begin{split}
        &\lim_{s\rightarrow1^-}(1-s)\int\limits_{[u]+z}\int\limits_{[u]^++w}\frac{\pos{f(t)-f(w)}^p}{\abs{t-w}^{1+sp}}dH^1(t)dH^1(w)\\
        =&\frac{1}{p}\int\limits_{[u]+z}\pos{\nabla f(t)\cdot u}^pdH^1(t).
    \end{split}
    \end{equation}

    By Fubini's theorem and the polar coordinate formula,
    we get
    \begin{align*}
        & \frac{1}{p}\int\limits_{\sr}\normk{K}{u}^{-(n+p)}\int\limits_{u^\bot}\int\limits_{[u]+z}\pos{\nabla f(t)\cdot u}^pdH^1(t)dH^{n-1}(z)d\sigma(u)\\
        =& \frac{1}{p}\int\limits_{\sr}\int\limits_{\R^n}\normk{K}{u}^{-(n+p)}\pos{\nabla f(x)\cdot u}^pdH^n(x)d\sigma(u)\\
        =& \frac{n+p}{p}\int\limits_K\int\limits_{\R^n}\pos{\nabla f(x)\cdot y}^pdH^n(x)dH^n(y)
    \end{align*}
    Using Fubini's theorem and the definition of the asymmetric $L_p$ moment body of $K$, we obtain
    \begin{equation}\label{eqn:inner}
    \begin{split}
        &\int\limits_{\sr}\normk{K}{u}^{-(n+p)}\int\limits_{u^\bot}\int\limits_{[u]+z}\pos{\nabla f(t)\cdot u}^pdH^1(t)dH^{n-1}(z)d\sigma(u)\\
        =&\int\limits_{\R^n}\normk{Z_p^{+,*}K}{\nabla f(x)}^pdH^n(x).
    \end{split}
    \end{equation}
    So, in particular, we have
    \begin{equation}\label{eqn:innerest}
    \begin{split}
        &\int\limits_{\sr}\int\limits_{u^\bot}\int\limits_{[u]+z}\pos{\nabla f(t)\cdot u}^pdH^1(t)dH^{n-1}(z)d\sigma(u)\\
        =&\frac{n+p}{4}K_{n,p}\int\limits_{\R^n}\abs{\nabla f(x)}^pdH^n(x)<+\infty.
    \end{split}
    \end{equation}

    Using the dominated convergence theorem with Lemma \ref{thm:est} and (\ref{eqn:innerest}), we obtain from (\ref{eqn:BP}), (\ref{eqn:1dim}) and (\ref{eqn:inner}) that
    \[\lim_{s\rightarrow1^-}(1-s)\int\limits_{\R^n}\int\limits_{\R^n}\frac{\pos{f(x)-f(y)}^p}{\normk{K}{x-y}^{n+sp}}dxdy=\frac{1}{p}\int\limits_{\R^n}\normk{Z_p^{+,*}K}{\nabla f(x)}^pdx.\]
\end{proof}

\begin{rem}
    In Theorem \ref{thm:main}, let $g=-f$ and $(x)_-=-\min\set{0,x}=\pos{-x}$, for $x\in\R$.
    Then, we get
    \[\lim_{s\rightarrow1^-}(1-s)\int\limits_{\R^n}\int\limits_{\R^n}\frac{(f(x)-f(y))_-^p}{\normk{K}{x-y}^{n+sp}}dxdy=\frac{1}{p}\int\limits_{\R^n}\normk{Z_p^{-,*}K}{\nabla f(x)}^pdx.\]
\end{rem}

\bigskip

\textbf{Acknowledgement.} The author wishes to thank Professor Monika\\ Ludwig
for her supervision and valuable suggestion, and the referee for\\ valuable suggestions
and careful reading on the original manuscript.
The work of the author was supported in part by Austrian Science Fund (FWF) Project P23639-N18
and National Natural Science Foundation of China Grant No. 11371239.



\begin{thebibliography}{10}

\bibitem{AFTL97}
A.~Alvino, V.~Ferone, G.~Trombetti, and P.-L. Lions, \emph{Convex
  symmetrization and applications}, Ann. Inst. H. Poincar\'{e} Anal. Non
  Lin\'{e}aire \textbf{14} (1997), 275--293.

\bibitem{BBM02}
J.~Bourgain, H.~Brezis, and P.~Mironescu, \emph{Another look at Sobolev
  spaces}, Optimal Control and Partial Differential Equations, A volume in
  honor of A. Bensoussans's 60th birthday (Amsterdam) (J.~L. Menaldi,
  E.~Rofman, and A.~Sulemn, eds.), IOS Press, 2001, pp.~439--455.

\bibitem{BBM02b}
\bysame, \emph{Limiting embedding theorems for $W^{s.p}$ when $s\uparrow1$ and
  applications}, J. Anal. Math. \textbf{87} (2002), 77--101, Dedicated to the
  memory of Thomas H. Wolff.

\bibitem{CNV04}
D.~Cordero-Erausquin, B.~Nazaret, and C.~Villani, \emph{A mass-transportation
  approach to sharp Sobolev and Gagliardo-Nirenberg inequalities}, Adv. Math.
  \textbf{182} (2004), 307--332.

\bibitem{DPV12}
E.~Di Nezza, G.~Palatucci, and E.~Valdinoci, \emph{Hitchhiker's guide to the
  fractional Sobolev spaces}, Bull. Sci. Math. \textbf{136} (2012), 521--573.

\bibitem{EG92}
L.~Evans and R.~Gariepy, \emph{Measure theory and fine properties of
  functions}, Studies in Advanced Mathematics, CRC Press, Boca Raton, FL, 1992.

\bibitem{FMP13}
A.~Figalli, F.~Maggi, and A.~Pratelli, \emph{Sharp stability theorems for the
  anisotropic Sobolev and log-Sobolev inequalities on functions of bounded
  variation}, Adv. Math. \textbf{242} (2013), 80--101.

\bibitem{Gag57}
E.~Gagliardo, \emph{Caratterizzazioni delle tracce sulla frontiera relative ad
  alcune classi di funzioni in $n$ variabili}, Rend. Sem. Mat. Univ. Padova
  \textbf{27} (1957), 284--305.

\bibitem{Gar06}
R.~J. Gardner, \emph{Geometric tomography}, 2nd ed., Cambridge Univ. Press, New
  York, 2006.

\bibitem{Gro86}
M.~Gromov, \emph{Isoperimetric inequalities in Riemannian manifolds},
  Asymptotic Theory of Finite-dimensional Normed Spaces (V.~D. Milman and
  G.~Schechtman, eds.), Springer-Verlag, Berlin Heidelberg, 1986, pp.~114--129.

\bibitem{Hab12b}
C.~Haberl, \emph{Minkowski valuations intertwining the special linear group},
  J. Eur. Math. Soc. \textbf{14} (2012), 1565--1597.

\bibitem{HS09a}
C.~Haberl and F.~Schuster, \emph{General~$L_p$ affine isoperimetric
  inequalities}, J. Differential Geom. \textbf{83} (2009), 1--26.

\bibitem{Lud03}
M.~Ludwig, \emph{Ellipsoids and matrix valued valuations}, Duke Math. J.
  \textbf{119} (2003), 159--188.

\bibitem{Lud05}
\bysame, \emph{Minkowski valuations}, Trans. Amer. Math. Soc. \textbf{357}
  (2005), 4191--4213.

\bibitem{Lud10b}
\bysame, \emph{Minkowski areas and valuations}, J. Differential Geom.
  \textbf{86} (2010), 133--161.

\bibitem{Lud13a}
\bysame, \emph{Anisotropic fractional perimeters}, J. Differential Geom.
  \textbf{96} (2014), 77--93.

\bibitem{Lud13b}
\bysame, \emph{Anisotropic fractional Sobolev norms}, Adv. Math.
  \textbf{252} (2014), 150--157.

\bibitem{Lut90}
E.~Lutwak, \emph{Centroid bodies and dual mixed volumes}, Proc. London Math.
  Soc. \textbf{60} (1990), 365--391.

\bibitem{LYZ00a}
E.~Lutwak, D.~Yang, and G.~Zhang, \emph{$L_p$ affine isoperimetric
  inequalities}, J. Differential Geom. \textbf{56} (2000), 111--132.

\bibitem{LYZ00b}
\bysame, \emph{A new ellipsoid associated with convex bodes}, Duke. Math. J.
  \textbf{104} (2000), 375--390.

\bibitem{LYZ02b}
\bysame, \emph{The Cramer-Rao inequality for star bodies}, Duke Math. J.
  \textbf{112} (2002), 59--81.

\bibitem{LYZ04c}
\bysame, \emph{Moment-entropy inequalities}, Ann. Probab. \textbf{32} (2004),
  757--774.

\bibitem{LYZ10b}
\bysame, \emph{Orlicz centroid bodies}, J. Differential Geom. \textbf{84}
  (2010), 365--387.

\bibitem{Maz11}
V.~G. Maz'ya, \emph{Sobolev spaces with applications to elliptic partial
  differential equations}, augmented ed., Grundlehren der Mathematischen
  Wissenschaften, vol. 342, Springer-Verlag, Berlin Heidelberg, 2011.

\bibitem{Pao06a}
G.~A. Paouris, \emph{Concentration of mass on convex bodies}, Geom. Funct.
  Anal. \textbf{16} (2006), 1021--1049.

\bibitem{PW12}
G.~A. Paouris and E.~Werner, \emph{Relative entropy of cone measures and $L_p$
  centroid bodies}, Proc. Lond. Math. Soc. \textbf{104} (2012), no.~2,
  253--286.

\bibitem{Par13a}
L.~Parapatits, \emph{SL(n)-contravariant $L_p$-Minkowski valuations}, Trans. Amer.
  Math. Soc. \textbf{366} (2014), 1195--1211.

\bibitem{Par13b}
\bysame, \emph{SL(n)-covariant $L_p$-Minkowski valuations}, J. London
  Math. Soc. \textbf{89} (2014), 397--414.


\bibitem{Pon04}
A.~Ponce, \emph{A new approach to Sobolev spaces and connections to
  $\Gamma$-convergence}, Calc. Var. Partial Differential Equations \textbf{19}
  (2004), 229--255.

\bibitem{Spe11}
D.~Spector, \emph{Characterization of Sobolev and BV spaces}, Ph.D. thesis,
  Carnegie Mellon University, 2011.

\bibitem{Wan11}
T.~Wannerer, \emph{GL(n) equivariant Minkowski valuations}, Indiana Univ. Math.
  J. \textbf{60} (2011), 1655--1672.

\end{thebibliography}

\end{document}